\newtheorem{dfn}{Definition}
\newtheorem{thm}{Theorem}
\newtheorem{prop}{Proposition}
\newtheorem{prob}{Problem}
\newtheorem{rem}{Remark}
\newcommand{\del}{\partial}
\newcommand{\pdiff}[2]{\frac{\del #1}{\del #2}}
\newcommand{\pdiffdiff}[2]{\frac{\del^2 #1}{\del {#2}^2}}
\newcommand{\PAR}[1]{{\left( #1 \right)}}
\newcommand{\PPAR}[1]{{\left\{ #1 \right\}}}
\newcommand{\PPPAR}[1]{{\left[ #1 \right]}}
\newcommand{\R}{\mathbb{R}}
\newcommand{\T}{{\rm T}}
\newcommand{\argmin}{\mathop{\rm arg~min}\limits}
\newcommand{\final}[1]{\textcolor{black}{#1}}
\begin{document}
\title{Structured Hammerstein-Wiener model learning for model predictive control
}
%
%
% author names and IEEE memberships
% note positions of commas and nonbreaking spaces ( ~ ) LaTeX will not break
% a structure at a ~ so this keeps an author's name from being broken across
% two lines.
% use \thanks{} to gain access to the first footnote area
% a separate \thanks must be used for each paragraph as LaTeX2e's \thanks
% was not built to handle multiple paragraphs
%
\author{
        Ryuta Moriyasu, Taro Ikeda, Sho Kawaguchi, Kenji~Kashima% <-this % stops a space
\thanks{
©2021 IEEE. Personal use of this material is permitted. Permission from IEEE must be obtained for all other uses, in any current or future media, including reprinting/republishing this material for advertising or promotional purposes, creating new collective works, for resale or redistribution to servers or lists, or reuse of any copyrighted component of this work in other works.
}
\thanks{
R.~Moriyasu and T.~Ikeda are with Toyota Central R\&D Labs, Aichi, Japan. S.~Kawaguchi is with Toyota Industries Coorporation, Aichi, Japan. K.~Kashima is with Graduate School of Informatics,
Kyoto University, Kyoto, Japan.
(e-mail:
% moriyasu@mosk.tytlabs.co.jp,t-ikeda@mosk.tytlabs.co.jp,sho.kawaguchi@mail.toyota-shokki.co.jp,
{\tt kk@i.kyoto-u.ac.jp}
)
}}
\pagestyle{empty}
\maketitle
\thispagestyle{empty}
\begin{abstract}
This paper aims to improve the reliability of optimal control using models constructed by machine learning methods.
Optimal control problems based on such models are generally non-convex and difficult to solve online.
% , which makes it difficult to guarantee the uniqueness and continuity of the solutions.
In this paper, we propose a model that combines the Hammerstein-Wiener model
% , which has long been studied in the field of system identification,
with input convex neural networks,
% (ICNN) and bijective neural networks (BNN)
which have recently been proposed in the field of machine learning.
An important feature of the proposed model is that resulting optimal control problems are effectively solvable exploiting their convexity and partial linearity while retaining flexible modeling ability.
% A control design method was proposed to attribute the OCP with input/output constraints to a convex problem by exploiting the invertibility of BNN and the convexity of ICNN.
% Furthermore, we have made it possible to guarantee the continuity of the control laws by adding differentiability to the NNs.
The practical usefulness of the method is examined through its application to the modeling and control of an engine airpath system.
% as an example of a constrained multi-input multi-output system were demonstrated to confirm the effectiveness of the method.
\end{abstract}

\begin{IEEEkeywords}
Model predictive control, Machine learning, Convex optimization, Input convex neural network
\end{IEEEkeywords}
\section{Introduction}
\label{sec:introduction}
    In recent years, there has been an increase in research on control modeling that utilizes machine learning methods such as neural networks and Gaussian processes for model predictive control (MPC); see e.g., \cite{kocijian01,hedjar01,lenz01,moriyasu01}.
    In the case of complex dynamics, first-principles modeling using physical laws requires advanced knowledge and experience, but there is a possibility that such dynamics can be modeled in a short time without advanced knowledge through data-driven modeling via machine learning methods.
    Also, factors that are difficult to handle and ignored in physical models can be implicitly learned from the data, which can result in more accurate models than physical models in some cases.
    %For these reasons, expectations for machine learning methods are increasing, especially in the modeling of complex systems.

	However, machine learning models cannot directly be utilized for numerical optimization-based control methods such as MPC \cite{maciej01,nghiem01}. This is mainly because
	%in MPC using nonlinear machine learning models,
	the resulting optimal control problem (OCP) to be solved at each time is a non-convex optimization problem \cite{MPCText01}, whose globally optimal solution is difficult to find. Besides, resulting control laws often have discontinuities that can cause hunting of the control input and lead to reliability issues such as instability of the control system and degradation of the actuator.

	%, because the OCP has multiple minimal solutions.
    % 	The nonconvexity of OCPs depends on the properties of the cost function and constraints, but
% 	In practical use, it is most often said that the nonconvex optimization problem is caused by the properties of the model \cite{MPCText01}. From this perspective, machine learning models tend to be nonconvex problems with a large number of minimal solutions that are difficult to handle due to their strong nonlinearity.
% 	Discontinuities in the control law can cause hunting of the control input and lead to reliability issue such as instability of the control system and degradation of the actuator.

    Recently, in the field of machine learning, Input Convex Neural Network (ICNN) \cite{convex01}, which can guarantee the convexity of input-output relations of a model, has been developed, and a control design method that guarantees the convexity of OCPs has been proposed by using Recurrent ICNN (RICNN), which is an extension of ICNN to a recursive structure, as a model for control \final{\cite{convex02,Bunning2020}}.
    A drawback is that the stage cost is limited to monotonically non-decreasing functions with respect to the state variables. This rules out, for instance, a quadratic cost that is a typical choice for regulation and tracking control. In addition, the closed loop property such as stability is not guaranteed. \final{See also Remark \ref{rem:computation} below for a necessity of further reduction of computation burden.}

    On the other hand, in the area of system identification, various nonlinear models with a special structure that makes OCP convex  have been developed. A typical example is the Hammerstein-Wiener (H-W) models, %\cite{hammerstein,wiener},
    consisting of linear dynamics and static nonlinearity. In the control design using these models, the static nonlinearity can be canceled out
    % , and the problem can be reduced to a linear control problem with only linear dynamics
    \cite{fruzzetti1997nolinear,cervantes2003nonlinear}. It means that the OCP with cost allowing state and output regulation becomes a convex problem, which can be efficiently solvable. However, when considering constraints on inputs and outputs, especially for the MIMO cases, it is difficult to guarantee the convexity of the feasible set of variables in the obtained model.
    As a result, the examples of studies that include constraints are mainly for the SISO cases \cite{chan2007model}.
    In addition, for the purpose of control design, it is preferable that the static nonlinear function is \final{bijective}. It is, however, difficult to identify complex nonlinear function while guaranteeing invertibility \cite{PATIKIRIKORALA201249,lawrynczuk2014computationally}.

    The purpose of this paper is to propose a novel model, which we refer to as \emph{structured Hammerstein-Wiener models}, and its control design method by combining the techniques developed in both machine learning and control theory. An important feature is that we can take into account constraints on inputs and outputs in a nonlinear multi-input/output system, and can also ensure global optimality and continuity of the control law.
	This paper is organized as follows: Section \ref{sec:Modeling} details the proposed model and its identification procedure.
	Section \ref{sec:OCP} reveals the properties, e.g.,  uniqueness and continuity, of OCP associated to the proposed model.
	In Section \ref{sec:engine}, the effectiveness of the proposed method is demonstrated with its application to MPC of an engine airpath system.

    \subsubsection*{Notation}
	The set of real and non-negative  numbers are denoted as $\mathbb R$ and ${\mathbb R}_+$. A function is said to be of class $C^r$ if it is $r$ times continuously differentiable. A function $f$ on ${\mathbb R}_+$ is said to be of class $\cal K$ if it is $f(0)=0$ and strictly increasing. For vectors $x,y$, $x_{(i)}$ denotes the $i$-th element of $x$ and $x\odot y$ is the Hadamard product. Function ${\rm softplus}\PAR{v} \coloneqq \log\PAR{1+e^v}>0$. The vector  ${1}_n:=[1,\ldots,1]^\T\in {\mathbb{R}}^n$.

\section{Modeling and Identification}
\label{sec:Modeling}

\subsection{Structured Hammerstein-Wiener Model}

	We denote the input $u\in\mathbb{R}^{n_u}$, the disturbance $d\in\R^{n_d}$. Tracking to the reference and constraints on internal states are described by using the output $y\in\mathbb{R}^{n_y}$ and $z\in\mathbb{R}^{n_z}$, respectively. 	We suppose $n_u=n_y$.
	We assume that the time series data of these signals are available, as well as their time derivatives (true value or difference approximation), if necessary.
	Our goal is to develop systems from
	$u,d$ to $y,z$, and obtain a control law that makes $y$ follow the reference $r\in\mathbb{R}^{n_y}$ while satisfying the box constraints for the input $u=[u_{(1)},\ldots,u_{(n_u)}]^\T$
	\begin{align}\label{eq:constu}
		u\in {\cal U} := \{ u:\underline{u}_{(i)} \leq u_{(i)} \leq \overline{u}_{(i)} \ (i=1,\ldots,n_u) \}
	\end{align}
	and output $z=[z_{(1)},\ldots,z_{(n_z)}]^\T$
	\begin{align}\label{eq:constz}
		z\in {\cal Z} := \{z:z_{(i)} \leq \overline{z}_{(i)} \ (i=1,\ldots,n_z)\}.
	\end{align}

    In this paper, we employ the continuous-time Hammerstein-Wiener model represented by
	\begin{align}
	v(t) &= \Psi(u(t);d(t)),  \label{eq:HWv}\\
	\dot{x}(t) &= A(d(t)) x(t) + B(d(t)) v(t) + c(d(t)), \label{eq:HWx} \\
	y(t) &= \Phi^{-1}(x(t);d(t)), \label{eq:HWy} \\
	z(t) &= \Xi (x(t),v(t);d(t)), \label{eq:HWz}
	\end{align}
	where $v\in\R^{n_u}$ and $x\in\R^{n_y}$ are converted input and output signals.
	We impose the following assumption that makes the resulting OCP effectively solvable.
	\begin{dfn}
	\label{dfn:cHW}
	If functions $\Psi: \R^{n_u} \times \R^{n_d} \rightarrow \R^{n_u}$, $\Phi: \R^{n_y} \times \R^{n_d} \rightarrow \R^{n_y}$, and $\Xi: \R^{n_y} \times \R^{n_u} \times \R^{n_d} \rightarrow \R^{n_z}$ satisfy
        \begin{enumerate}
            \item  $\Psi(\cdot;d)$ and $\Phi(\cdot;d)$ are bijective, and
            \item $\Xi(\cdot,\cdot;d)$ is convex,
        \end{enumerate}
        for any fixed $d\in \R^{n_d}$, the system in \eqref{eq:HWv} to \eqref{eq:HWz} is called a \emph{structured Hammerstein-Wiener model}.
    \end{dfn}

\subsection{Parameterization of bijective and convex mappings}

In this section, we parameterize the bijective and convex mappings in Definition \ref{dfn:cHW}.
In recent years, there have been many proposals in the area of flow-based generative models (Normalizing Flow) \cite{rezende01}. In this area, for the ease of the evaluation of the determinant of the Jacobian, neural networks with a special structure in which the Jacobian is a block triangular matrix are used. Since such property is unnecessary for the purpose of this paper, we extend the Bijective NN \cite{baird2005one}, which is a relatively old known model structure with higher degrees of freedom.

	\begin{dfn}
		Let $\varphi^{(\cdot)}:\R^{n_\xi} \times \R^{n_\eta} \rightarrow \R^{n_\xi}$ be a elementwise nonlinear function. Function $f(\xi;\eta,\theta)$ with $\theta = (\Omega^{(i)},\beta^{(i)})_{i=1}^L$ where $\Omega^{(\cdot)}:\R^{n_\eta} \rightarrow \R^{n_\xi \times n_\xi}$,
		$\beta^{(\cdot)}:\R^{n_\eta} \rightarrow \R^{n_\xi}$ is said to be a bijective neural network (abbr. BNN) if
		\begin{align}
			&f(\xi;\eta,\theta) = \xi^{(L)},\\
			&\xi^{(i)} = \varphi^{(i)} \PAR{\Omega^{(i)}(\eta) \xi^{(i-1)} \!+\! \beta^{(i)}(\eta),\eta} \ (i=1,...,L), \label{BNNAI}\\
			&\xi^{(0)} = \xi,
		\end{align}
		$\Omega^{(i)}(\eta)$ is nonsingular and $\varphi^{(i)}(\cdot,\eta)$ is bijective for any $i,\eta$. If, in addition, $\Omega^{(i)}(\eta)$ is diagonal for any $i,\eta$, $f$ is said to be a diagonal BNN.
		A BNN $f$ is said to be $C^r$-diffeomorphic neural network (abbr.~$C^r$-BNN) if $\varphi^{(i)}, \Omega^{(i)}, \beta^{(i)}\ (i=1,\ldots,L)$ are $C^r$ with respect to $\xi,\eta$.
	\end{dfn}
    As a component of neural networks, $\varphi,\Omega,\beta$ are called as an activation function, weighting matrix, and bias vector, respectively.
    If $f$ is a $C^r$-BNN,  $f(\cdot;\cdot,\theta)$ is a $C^r$ function.
	\begin{prop}\label{BNNAI_bijection}
		Let $f(\xi;\eta,\theta)$ be a BNN. Then, $f(\cdot;\eta,\theta)$ is bijective for any $\eta\in\R^{n_\eta}$.
    \end{prop}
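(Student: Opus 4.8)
The plan is to exhibit $f(\cdot;\eta,\theta)$ as a finite composition of bijections, so that the claim follows from the elementary fact that a composition of bijections is again a bijection. First I would fix $\eta\in\R^{n_\eta}$ and $\theta$ once and for all, since both are held constant throughout the map $\xi\mapsto f(\xi;\eta,\theta)$. For each layer $i\in\{1,\ldots,L\}$ I would isolate the single-layer map $g^{(i)}:\R^{n_\xi}\to\R^{n_\xi}$ that sends $\xi^{(i-1)}$ to $\xi^{(i)}$, namely
\[
g^{(i)}(w) := \varphi^{(i)}\PAR{\Omega^{(i)}(\eta)\, w + \beta^{(i)}(\eta),\,\eta}.
\]
Reading off the recursion \eqref{BNNAI} together with $\xi^{(0)}=\xi$ and $f=\xi^{(L)}$, this gives the factorization $f(\cdot;\eta,\theta) = g^{(L)}\circ g^{(L-1)}\circ\cdots\circ g^{(1)}$.

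The core step is to verify that each individual $g^{(i)}$ is bijective. I would split it as $g^{(i)} = \varphi^{(i)}(\cdot,\eta)\circ a^{(i)}$, where $a^{(i)}(w) := \Omega^{(i)}(\eta)\,w + \beta^{(i)}(\eta)$ is the affine part. The affine map $a^{(i)}$ is a bijection of $\R^{n_\xi}$ precisely because $\Omega^{(i)}(\eta)$ is nonsingular by hypothesis, its inverse being $w\mapsto \Omega^{(i)}(\eta)^{-1}\PAR{w-\beta^{(i)}(\eta)}$. The activation $\varphi^{(i)}(\cdot,\eta)$ is bijective directly by the defining assumption of a BNN. Hence each $g^{(i)}$, being the composition of two bijections, is itself a bijection.

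Finally I would conclude, by a trivial induction on the number of layers (or directly from the fact that the composition of finitely many bijections is a bijection), that $g^{(L)}\circ\cdots\circ g^{(1)} = f(\cdot;\eta,\theta)$ is a bijection of $\R^{n_\xi}$. Since $\eta\in\R^{n_\eta}$ was arbitrary, the conclusion holds for every $\eta$, which is exactly the statement.

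I do not expect a genuine obstacle, as the proposition is essentially definitional: the two hypotheses built into the BNN definition (nonsingularity of $\Omega^{(i)}(\eta)$ and bijectivity of $\varphi^{(i)}(\cdot,\eta)$) are designed precisely to make each layer invertible. The only point worth stating carefully is the clean decomposition of a layer into its affine part and its activation, so that invertibility of $\Omega^{(i)}(\eta)$ is what supplies bijectivity of the linear stage; once this bookkeeping is in place, the rest is the standard closure of bijections under composition.
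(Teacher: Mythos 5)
Your proof is correct and is essentially the same as the paper's: the paper proves the proposition by writing down the explicit layer-by-layer inverse $\xi^{(i-1)} = \PAR{\Omega^{(i)}(\eta)}^{-1}\PAR{{\varphi^{(i)}}^{-1}(\xi^{(i)},\eta)-\beta^{(i)}(\eta)}$, which is exactly the composition of the inverses of your single-layer maps $g^{(i)}$ in reverse order. Your presentation merely makes the composition-of-bijections bookkeeping explicit rather than exhibiting the inverse formula directly.
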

	\begin{proof}
	    The inverse mapping is given by
    	\begin{align}
    		&f^{-1}(\xi;\eta,\theta) = \xi^{(0)},\\
    		&\xi^{(i-1)} = \PAR{\Omega^{(i)}(\eta)}^{\! -1} \PAR{ {\varphi^{(i)}}^{-1} \PAR{ \xi^{(i)} , \eta}\!-\! \beta^{(i)}(\eta)  } \\
    		&\hspace{40pt} (i=1,...,L),\nonumber \\
    		&\xi^{(L)} = \xi.
    	\end{align}
	\end{proof}

	For convex mappings, \cite{convex01} proposed a specific neural network that guarantees the convexity with respect to selected input variables. We utilize similar architecture with modification to make them differentiable.

	\begin{dfn}
	\label{dfn:PICNN}
		Let $\varphi_\zeta^\PAR{\cdot}$ be monotonically non-decreasing convex $C^r$ functions and $\varphi_\eta^\PAR{\cdot}$ be $C^r$ functions. The parameter set $\theta=\{W_\zeta^\PAR{\cdot},W_\xi^\PAR{\cdot},W_{\eta}^\PAR{\cdot},W_{\zeta\eta}^\PAR{\cdot},W_{\xi\eta}^\PAR{\cdot},W_{\eta\eta}^\PAR{\cdot},b_{\eta}^\PAR{\cdot},b_{\zeta\eta}^\PAR{\cdot},b_{\xi\eta}^\PAR{\cdot},b_{\eta\eta}^\PAR{\cdot}\}$, where all elements of  $W_\zeta^\PAR{\cdot}$ are nonnegative. Then,
		$\Xi(y;d,\theta)$ is said to be a $C^r$ partially input convex neural network ($C^r$-PICNN) if
		\begin{align}
		& \Xi \PAR{\xi,\eta;\theta} = \zeta^\PAR{L},\\
		&\zeta^\PAR{i} = \varphi_\zeta^\PAR{i} \left(
		W_\zeta^\PAR{i} \PAR{ \zeta^\PAR{i-1} \odot {\rm softplus}\PAR{v_\zeta^\PAR{i}} } \right.\nonumber\\
		&\ \ \ \ \ \ \ \left. + W_\xi^\PAR{i} \PAR{ \xi \odot v_\xi^\PAR{i}} + v_\eta^{(i)} \right) \ (i\!=\!1,\ldots,L),\\
		& \eta^\PAR{i} = \varphi_\eta^\PAR{i} \PAR{ W_{\eta}^\PAR{i} \eta^\PAR{i-1} \!+\! b_{\eta}^\PAR{i} } \ (i=1,\!\ldots\!,L\!-\!1),\\
			& v_\zeta^\PAR{i} =W_{\zeta\eta}^\PAR{i} \eta^\PAR{i-1} + b_{\zeta\eta}^\PAR{i} \ (i=1,\ldots,L),\\
			& v_\xi^\PAR{i} = W_{\xi\eta}^\PAR{i} \eta^\PAR{i-1} + b_{\xi\eta}^\PAR{i}  \ (i=1,\ldots,L),\\
			& v_\eta^\PAR{i} = W_{\eta\eta}^\PAR{i} \eta^\PAR{i-1} + b_{\eta\eta}^\PAR{i} \ (i=1,\ldots,L),\\
			& \zeta^\PAR{0} = \xi,\ \ \eta^\PAR{0} = \eta.
		\end{align}
	\end{dfn}

By the same argument as in \cite{convex01}, we can show the following:
	\begin{prop}\label{CrPICNN_thm}
		Let $\Xi(\xi,\eta;\theta)$ be a $C^r$-PICNN. Then, $\Xi(\cdot,\eta;\theta)$ is a convex function for any $\theta, \eta$, and $\Xi(\cdot,\cdot;\theta)$ is a class $C^r$ function for any $\theta$.
	\end{prop}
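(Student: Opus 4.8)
The plan is to prove both claims by induction on the layer index $i$, handling the convexity assertion and the smoothness assertion separately, since the hypotheses each requires are different. Throughout I would fix $\theta$, and for the convexity part additionally fix $\eta$. The decisive structural observation is that, once $\eta$ is fixed, the auxiliary quantities $v_\zeta^{(i)}$, $v_\xi^{(i)}$, and $v_\eta^{(i)}$ depend on $\eta$ only through the separate recursion for $\eta^{(i)}$ (via $\eta^{(0)}=\eta$), and hence are \emph{constant} vectors when viewed as functions of $\xi$. This decoupling of the $\eta$-path from the $\xi$-path is exactly what lets the whole network be convex in its $\xi$ argument alone.

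For convexity, I would prove by induction that each component of $\zeta^{(i)}$ is a convex function of $\xi$. The base case $\zeta^{(0)}=\xi$ is affine, hence convex. For the inductive step, suppose every component of $\zeta^{(i-1)}$ is convex in $\xi$, and analyze the argument of $\varphi_\zeta^{(i)}$ term by term: the Hadamard product $\zeta^{(i-1)}\odot\,{\rm softplus}(v_\zeta^{(i)})$ rescales each convex component of $\zeta^{(i-1)}$ by the \emph{strictly positive} constant ${\rm softplus}(v_\zeta^{(i)})$, preserving convexity; multiplication by $W_\zeta^{(i)}$, whose entries are nonnegative by definition, forms a nonnegative combination of convex functions and is therefore convex componentwise; the term $W_\xi^{(i)}(\xi\odot v_\xi^{(i)})$ is affine in $\xi$; and $v_\eta^{(i)}$ is constant. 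Hence the argument of $\varphi_\zeta^{(i)}$ is convex in each coordinate. Since the elementwise activation $\varphi_\zeta^{(i)}$ is monotonically non-decreasing and convex, the composition rule ($h\circ g$ is convex when $h$ is convex non-decreasing and $g$ is convex) shows each component of $\zeta^{(i)}$ is convex in $\xi$. Taking $i=L$ yields convexity of $\Xi(\cdot,\eta;\theta)$.

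For the $C^r$ claim I would run a parallel induction showing that $\zeta^{(i)}$ and $\eta^{(i)}$ are $C^r$ jointly in $(\xi,\eta)$. The base case is immediate. In the inductive step, the affine maps defining $v_\zeta^{(i)}$, $v_\xi^{(i)}$, $v_\eta^{(i)}$ are $C^\infty$ in $\eta^{(i-1)}$, the function ${\rm softplus}$ is $C^\infty$, and the Hadamard products and matrix multiplications are smooth bilinear operations; composing these with the $C^r$ activations $\varphi_\zeta^{(i)}$ and $\varphi_\eta^{(i)}$, and using that compositions and products of $C^r$ maps are $C^r$, gives that $\zeta^{(i)}$ and $\eta^{(i)}$ are $C^r$. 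The case $i=L$ delivers the result.

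I expect the only genuinely delicate point to be the convexity step, namely verifying that \emph{every} operation in the recursion preserves convexity in the correct direction. The two sign conditions built into the architecture—nonnegativity of $W_\zeta^{(i)}$ and strict positivity of ${\rm softplus}(\cdot)$—are precisely what keeps the Hadamard rescaling and the linear combination convex rather than concave, while the monotonicity of $\varphi_\zeta^{(i)}$ is what licenses the final composition; dropping any one of these breaks the argument. The smoothness induction, by contrast, is routine bookkeeping with the chain and product rules.
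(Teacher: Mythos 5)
Your proof is correct and takes essentially the same approach the paper relies on: the paper gives no details itself, deferring to the inductive composition-rule argument of the ICNN paper \cite{convex01}, which is exactly your layer-by-layer induction (nonnegativity of $W_\zeta^{(i)}$, strict positivity of the softplus rescaling, affineness of the $\xi$-term, constancy of the $\eta$-path in $\xi$, and composition with convex non-decreasing activations). Your write-up additionally makes explicit the routine $C^r$ induction needed for the softplus-modified architecture, which the paper likewise leaves implicit.
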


	\subsection{Identification}

    In this section, we discuss the identification procedure under the constraints that $\Psi,\Phi$ are $C^r$-BNN and $\Xi$ is $C^r$-PICNN. Parameterization of such models are given by
	\begin{align}
		\!\!\! v(t)&\!=\!\Psi(u(t),d(t);\theta_\Psi), \label{v}\\
		\!\!\! \dot{x}(t) &\!=\! A(d(t);\theta_A) x \!+\! B(d(t);\theta_B) v \!+\! c(d(t);\theta_c), \label{xdot}\\
		\!\!\! y(t)&\!=\!\Phi^{-1}\!(x(t);d(t),\theta_\Phi), \label{y}\\
		\!\!\! z(t)&\!=\!\Xi(x(t),v(t),d(t);\theta_\Xi).
		\label{z}
	\end{align}
    In most identification methods of the H-W models, linear dynamics and nonlinear function (and its inverse) are determined repeatedly \cite{giri2010block}. On the contrary, we propose a one-shot learning method based on an analytic inverse of the BNN.

	The goal is to determine the parameter $\theta_\bullet$  based on the time series data of $u,d,y,z$. We begin with elimination of $x$ and $v$. From (\ref{y}), we have
	\begin{align}
		x=\Phi(y;d,\theta_\Phi), \label{x}
	\end{align}
	and its time derivative
	\begin{align}
		\dot{x}=\pdiff{\Phi(y;d)}{y} \dot{y} + \pdiff{\Phi(y;d)}{d}\dot{d}. \label{xdot2}
	\end{align}
	Consequently, (\ref{xdot}),(\ref{z}),(\ref{x}),(\ref{xdot2}) leads to
	\begin{align}
		\begin{bmatrix}
			\dot{y}\\
			z
		\end{bmatrix}
		=
		\begin{bmatrix}
		%\ds final
		\PAR{\pdiff{\Phi}{y}}^{-1} \PAR{A(d) \Phi + B(d) \Psi + c(d) - \pdiff{\Phi}{d} \dot{d}}\\
		\Xi(\Phi,\Psi,d)
		\end{bmatrix},\label{model}
	\end{align}
	where $\Phi$ and $\Psi$ denote $\Phi(y;d,\theta_\Phi)$ and $\Psi(u;d,\theta_\Psi)$, respectively.
	This means the identification reduces to the following minimization:
	\begin{prob}
		Suppose that time series data $\PPAR{u,y,\dot{y},z,d,\dot{d}}_i (i=1,\ldots,N)$, where $i$ is the label for the data, and a positive definite matrix $K_e$ are given.
		Find
		\begin{align}
			\theta = \PPPAR{\theta_\Psi^\T,\theta_\Phi^\T,\theta_\Xi^\T,\theta_A^\T,\theta_B^\T,\theta_c^\T}^\T \label{params}
		\end{align}
		that minimizes
		\begin{align}
            \frac{1}{N}\sum_{i=1}^{N} e_i^\T K_e e_i,
		\end{align}
		where $e_i$ is the prediction error given by
			\begin{align}
		\begin{bmatrix}
		%\ds final
		\dot{y}-\PAR{\pdiff{\Phi}{y}}^{-1} \PAR{A(d) \Phi + B(d) \Psi + c(d) - \pdiff{\Phi}{d} \dot{d}}\\
		z-\Xi(\Phi,\Psi,d)
		\end{bmatrix}.\label{error}
	\end{align}
	\end{prob}
	\noindent
	This problem can be effectively solvable via Stochastic Gradient Decent methods. This procedure can be implemented by standard machine learning libraries such as Tensorflow as far as the Jacobian $\del\Phi/\del y$ is nonsingular.

	\begin{rem}\label{jacob_thm}
		The inverse function theorem tells us that, for any $C^r$-BNN $\Phi$, $\del\Phi/\del y$ is nonsingular; see also the proof of \final{Proposition} \ref{BNNAI_bijection}. Since the set of nonsingular matrices is dense in the matrix field, $\Omega^{(i)}$ included in $\Phi(y;d,\theta_\Phi)$ remains nonsingular during learning almost surely when each element of $\Omega^{(i)}$ is parametrized independently. This can be proven rigorously by introducing a suitable probability space, which will be omitted due to page limitations.
	\end{rem}

\section{Optimization-based Control}
\label{sec:OCP}
\subsection{Uniqueness}

	In this section, we study how to construct an OCP that can guarantee the uniqueness of the solution, aiming at the implementation of online MPC via numerical optimization. Let us discretize the continuous-time system (\ref{eq:HWv})-(\ref{eq:HWz}) with the sampling period $\Delta$ as
	\begin{align}
		v_k&=\Psi(u_k \final{;} d_k), \label{v_d}\\
		x_{k+1} &= A_\Delta (d_k) x_k + B_\Delta (d_k) v_k + c_\Delta(d_k), \label{xdot_d}\\
		y_k&=\Phi^{-1}(x_k \final{;} d_k), \label{y_d}\\
		z_k&=\Xi(x_k,v_k\final{;}d_k),\label{z_d}
	\end{align}
	with $A_\Delta=e^{A\Delta},\allowbreak B_\Delta=(\int_0^\Delta e^{A(\Delta-\tau)} d\tau) B, \allowbreak c_\Delta=(\int_0^\Delta e^{A(\Delta-\tau)} d\tau) c$.
	In what follows, finite-time optimization problem with $k=0,\ldots,n$ is considered, where $n$ represents the prediction horizon length. Denote $U \coloneqq \PPPAR{u_0^\T,\ldots,u_{n-1}^\T}^\T \in \R^{n n_u}$, and $V,X,Z,R$ similarly. $\Psi(V;\bar d):=[\Psi(v_0;\bar d),\ldots,\Psi(v_n;\bar d)]^\T$, and $\Xi(X,V;\bar d)$ similarly. We hereafter assume that $d_i=\bar d, r_i=\bar r,\ i=0,\ldots,n$.
%	\begin{align}
%	    d_i=\bar d, r_i=\bar r,\ i=0,\ldots,n.
%	\end{align}
	\begin{prob}\label{ocp2}
		Let $x_0, n$, $\bar d, \bar r$, and $\underline{u}<\overline{u}$ be given. Suppose that $Q\succ 0$, $f_v$ is a convex $C^2$-function, $f_z$ is a convex and monotonically non-decreasing $C^2$-function. Find $U$ that minimizes
		\begin{align}
			& f := E^\T Q E + f_u(U) + f_z(Z) \label{ocp2obj} \\
			& E \coloneqq X - {1}_n \otimes \Phi(\bar r \final{;} \bar d)
		\end{align}
%         with
% 		\begin{align}
% 			E \coloneqq X - \mathbbm{1}_n \otimes \Phi(\bar r,\bar d)
% % 			\begin{bmatrix}
% % 			\Phi(\bar r,\bar d)\\
% % 			\vdots\\
% % 			\Phi(\bar r,\bar d)
% % 			\end{bmatrix}
%  		\end{align}
		subject to $u_i \in {\cal U},\ i=0,\ldots,n-1$.
% 		\begin{align}
% 		    u_i \in {\cal U},\ i=0,\ldots,n-1.
% 		    \label{boxconst}
% 		\end{align}
	\end{prob}
	The tracking error $E$ is represented in terms of the internal state $x$. For example, take
	\begin{align}
		\nabla y_0 &\coloneqq \pdiff{\Phi^{-1}(x_0 \final{;} \bar d)}{x},Q_0 \coloneqq (\nabla y_0)^\T(\nabla y_0),\\
		Q &:= I_{n\times n}\otimes Q_0.
		\label{eq:Q}
	\end{align}
    This choice of $Q$ corresponds to $\|Y-R\|^2$ in \eqref{ocp2obj} where $\Phi^{-1}$ is linearly approximated around the initial state $(x_0,\bar d)$.
	Function $f_u$ characterizes the input cost.
	We use $f_z$ to describe a soft constraint for \eqref{eq:constz} in the form of a penalty term, e.g.,
	\begin{align}
		f_z(Z) &= \sum_{i=0}^{n-1} \sum_{j=1}^{n_z}
		\max(0,w_j (z_{ji} - \overline{z}_j)^3), \label{f_z}
% 		\\
% 		s_{ji} &\coloneqq \left\{ \begin{array}{ll}
% 		0 & (z_{ji} - \overline{z}_j\leq 0) \\
% 		w_j (z_{ji} - \overline{z}_j)^3 & (z_{ji} - \overline{z}_j> 0)\\
% 		\end{array} \right.,
	\end{align}
	with weight $w_j$. This is just to avoid that the feasible set is empty and the hard constraint \eqref{eq:constz} can be dealt with similarly.

	\begin{thm}\label{ocp1_convex}
		Suppose that $\Psi$ is a diagonal BNN, $\Phi$ is a BNN, $\Xi$ is PICNN, and $B_\Delta(\bar d_i)$ is nonsingular. Then, Problem \ref{ocp2}
		has a unique stationary solution.
	\end{thm}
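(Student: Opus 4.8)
The plan is to pass to the converted input $V=\Psi(U;\bar d)$, in which coordinates I claim Problem~\ref{ocp2} becomes a strictly convex program over a box. First I would exploit that $\Psi$ is a \emph{diagonal} BNN: the map $u\mapsto v=\Psi(u;\bar d)$ then acts elementwise, and each scalar component is a continuous bijection of $\mathbb R$ (Proposition~\ref{BNNAI_bijection} together with nonsingularity of the scalar weights), hence strictly monotone. Consequently $\Psi(\cdot;\bar d)$ carries the box $\mathcal U$ onto a box in $v$-space, so the feasible set $\{U:u_i\in\mathcal U,\ i=0,\ldots,n-1\}$ corresponds to a Cartesian product of intervals in $V$, i.e.\ a nonempty compact convex set. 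I expect this to be the crux of the argument: it is precisely the diagonal structure that guarantees the image of a box is again a box, and for a general (non-diagonal) BNN the image of $\mathcal U$ would be a curved region and convexity of the feasible set would be lost.

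Next I would rewrite the objective as a function of $V$. The discretized dynamics \eqref{xdot_d} are affine in $v_k$, so $X$ is an affine function $X=GV+h$ with $G=\partial X/\partial V$ block lower triangular and diagonal blocks equal to $B_\Delta(\bar d)$. Since $B_\Delta(\bar d)$ is nonsingular and $n_u=n_y$, the square matrix $G$ is nonsingular, so $V\mapsto X$ is an affine bijection and $E=X-1_n\otimes\Phi(\bar r;\bar d)$ is an injective affine function of $V$; with $Q\succ 0$ this forces $E^\T Q E$ to be \emph{strictly} convex in $V$. For the penalty term, $(X,V)$ is affine in $V$, so by Definition~\ref{dfn:cHW}(2) and Proposition~\ref{CrPICNN_thm} each component of $Z=\Xi(X,V;\bar d)$ is convex in $V$ (convexity is preserved under affine precomposition); since $f_z$ is convex and monotonically non-decreasing, the vector composition rule makes $f_z(Z)$ convex in $V$, and the input cost is convex in $V$ by hypothesis. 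A sum of a strictly convex term and convex terms is strictly convex, so the reformulated objective is a strictly convex $C^2$ function of $V$.

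Finally I would invoke the standard conclusion that a strictly convex continuous function on a nonempty compact convex set attains its minimum at a unique point, which for a convex program is exactly the unique stationary (KKT) point. To transport this back to the variable $U$, I would use that $\Psi(\cdot;\bar d)$ is a diffeomorphism whose Jacobian $\partial V/\partial U$ is diagonal and nonsingular: the KKT systems in $U$ and in $V$ are then related by left-multiplication by $(\partial V/\partial U)^\T$ and the active faces of the box $\mathcal U$ match those of the $v$-box, so stationarity in the two coordinates corresponds one-to-one, and $U=\Psi^{-1}(V;\bar d)$ recovers the unique stationary $U$. Beyond the diagonal-box step already flagged, the point requiring genuine care is establishing \emph{strict} (not merely plain) convexity of $E^\T Q E$, which is exactly where the nonsingularity of $B_\Delta(\bar d)$ is used.
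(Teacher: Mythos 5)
Your proposal is correct and follows essentially the same route as the paper's own proof: pass to the converted input $V$ (using the diagonal BNN structure so that the box constraint $\mathcal U$ remains a box in $v$-space), exploit that $X$ is affine in $V$ with block lower triangular $\bar B$ having nonsingular diagonal blocks $B_\Delta(\bar d)$ to get strict convexity of $E^\T Q E$, obtain convexity of $f_z(\Xi(X,V;\bar d))$ from the PICNN property and the composition rule, and conclude uniqueness of the stationary point of the resulting strictly convex program. Your write-up is somewhat more explicit than the paper's (compactness of the $v$-box for existence, and the final transport of stationarity back to the $U$ coordinates via the diffeomorphism $\Psi$), but these are elaborations of the same argument rather than a different approach.
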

	\begin{proof}
	By virtue of the invertibility of $\Psi$, we can regard $V$ as decision variables, instead of $U$. Then, $X$ is affine with respect to $V$:
		\begin{align}
		&X = \bar{A}(\bar d) x_0 + \bar{B}(\bar d) V + \bar{c}(\bar d), \label{eq:affineX}\\
		&\bar{A} \coloneqq
		\begin{bmatrix}
			A_\Delta \\ A_\Delta^2 \\ \vdots \\ A_\Delta^n
		\end{bmatrix},
		\bar{c}\coloneqq
		\begin{bmatrix}
			c_\Delta \\ (A_\Delta+I)c_\Delta \\ \vdots \\ \PAR{\sum_{i=0}^{n-1} A_\Delta^i} c_\Delta
		\end{bmatrix},\\
		&\bar{B}\coloneqq
		\begin{bmatrix}
			B_\Delta & O & \cdots & O \\
			A_\Delta B_\Delta & B_\Delta & \cdots & O \\
			\vdots & & \ddots & \vdots \\
			A_\Delta^{n-1} B_\Delta & A_\Delta^{n-2} B_\Delta & \cdots & B_\Delta
		\end{bmatrix}.
	\end{align}
Also, the convexity of $\Xi$ guarantees that $Z$ is a convex function of $V$. The assumptions on $B_\Delta$ and $Q$ imply  that the Hessian of $E^\T Q E$ is positive-definite. This shows that
\begin{align}
    f(V;x_0,\bar d,\bar r) := E^\T Q E + f_u(\Psi(V;\bar d)) + f_z(\Xi(X,V;\bar d))
\end{align}
is a strictly convex function of $V$. By the assumption \final{that $\Psi$ is a diagonal BNN}, the constraint \eqref{eq:constu} can be represented as a box constraint on $V$,
% such as
% 		\begin{equation}
% 		    g(V;x_0,\bar d,\bar r) :=
% 			\begin{bmatrix}
% 				\Psi(\underline{u},d_0) - v_0\\
% 				v_0 - \Psi(\overline{u},d_0)\\
% 				\vdots\\
% 				\Psi(\underline{u},d_0) - v_{n-1}\\
% 				v_{n-1} - \Psi(\overline{u},d_0)\\
% 			\end{bmatrix} \leq 0,\label{boxconst}
% 		\end{equation}
		for which the feasibility set is not empty.
        Finally, the desired result follows from the theory of nonsmooth optimization \cite{Clarke}.
	\end{proof}

	\begin{rem}\label{rem:computation}
	\final{The} fact that the optimal control problem reduces to a convex problem is attractive because the optimal solution can be obtained via gradient methods. However, some industrial applications require a very fast sampling period (e.g., less than $1$ msec to update control input for application to vehicle engine control discussed in the next section) and low-performance computer implementations. In such cases, convexity alone is not sufficient for online optimization. For example, the evaluation of the gradient of the control cost (via backpropagation) for the model in \cite{convex02} requires the same number of substitutions to the neural network as the horizon length $n$, which can be a bottleneck of the online implementation. On the other hand, concerning the proposed model, $\del f/\del V$ requires the neural network substitution only once since the state transition can be described by a matrix multiplication; see \eqref{eq:affineX}. This property reduces the computation burden significantly.
	\end{rem}

\subsection{Continuity}

    The stationary solution is a function of $(x_0,\bar d,\bar r)$, which can be viewed as a control law.
    We next study the continuity of this control law with respect to $(x_0,\bar d,\bar r)$. Let us introduce Lagrange multiplier $\lambda\in\R^{2n n_u}$ such as
    \begin{align}
		L(V,\lambda;x_0,\bar d,\bar r) \coloneqq f(V;x_0,\bar d,\bar r) + \lambda^{\T} g(V;x_0,\bar d,\bar r).
	\end{align}
	Here, $g\le 0$ represents $u_i \in {\cal U}, i=0,...,n-1$. Because of the convexity of Problem \ref{ocp2}, a $V$ is a globally optimal solution if and only if the KKT condition
		\begin{align}
			\pdiff{L}{V}=0, \ \lambda \odot g=0,\  \lambda \geq 0,\ g \leq 0, \label{kktcond}
		\end{align}
	is satisfied.
	This inequality condition can be rewritten as an equality condition by using Fisher-Burmeister (FB) function \cite{fischer1995ncp} $\phi(a,b) \coloneqq a+b - \sqrt{a^2 + b^2}.$
%	\begin{align}
%		\phi(a,b) \coloneqq a+b - \sqrt{a^2 + b^2}.
%	\end{align}
    That is, the KKT condition (\ref{kktcond}) is equivalent to
		\begin{align}
			F(V,\lambda;x_0,\bar d,\bar r) \coloneqq
			\begin{bmatrix}
				%\ds final
			\PAR{\pdiff{L}{V}}^\T \\
				\phi(V,-g)
			\end{bmatrix} = 0.\label{kktsys}
		\end{align}
	Consider the generalized Jacobian\footnote{The generalized Jacobian is needed since $\phi(a,b)$ is not differentiable at $a=b=0$. } $\partial F$ of $F$ \cite{Clarke} with respect to $\tilde{V} \coloneqq \PPPAR{V^\T,\lambda^\T}^\T$.
	Then, we can show the continuity of the optimal control law.
	\begin{thm}\label{controllaw}
		Denote $\tilde{V}^*$ the $(x_0,\bar d,\bar r)$-dependent unique solution to \eqref{kktsys}. Then, the control law defined by
		\begin{align}
			u(x_0,\bar d,\bar r) &\coloneqq \Psi^{-1}( v^*(x,\bar d, \bar r),\bar d ),\label{col1_u} \\
			v^*(x,\bar d,\bar r) &\coloneqq \tilde{V}_{1:n_u}^*,\label{col1_v}
		\end{align}
		is locally Lipschitz continuous,
		where $\tilde{V}_{1:n_u}^*$ represents the first $n_u$ elements of $\tilde{V}^*$, i.e., the optimal input at the first time period.
	\end{thm}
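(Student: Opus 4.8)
The plan is to treat \eqref{kktsys} as defining $\tilde V^*$ implicitly as a function of the parameters $(x_0,\bar d,\bar r)$ and to invoke Clarke's nonsmooth implicit function theorem \cite{Clarke}. First I would observe that $F$ is locally Lipschitz jointly in $(\tilde V,x_0,\bar d,\bar r)$: the smoothness hypotheses ($f_u,f_z\in C^2$ and $\Psi,\Phi,\Xi$ built from $C^r$ networks) make $\partial L/\partial V$ continuously differentiable, while the componentwise Fisher--Burmeister entries in \eqref{kktsys} are globally Lipschitz. Clarke's theorem then yields a single-valued, locally Lipschitz solution map near any base point, \emph{provided} every matrix in the generalized Jacobian $\partial_{\tilde V}F$ is nonsingular there. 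Verifying this nonsingularity is the heart of the proof.

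Next I would read off the block structure of $\partial_{\tilde V}F$. Differentiating $\PAR{\partial L/\partial V}^\T$ gives the top block $[\,\nabla^2_V L\;\;\partial g/\partial V\,]$ (up to the sign convention of $\lambda$). Since $\Psi$ is a diagonal BNN, the constraint $u_i\in\cU$ becomes a box constraint that is \emph{affine} in $V$; hence $g$ contributes nothing to the second derivative and $\nabla^2_V L=\nabla^2_V f$, which is positive definite by the strict convexity (positive-definite Hessian) established in the proof of Theorem~\ref{ocp1_convex}. The remaining rows are the componentwise generalized gradients of the Fisher--Burmeister entries. Each entry is smooth except at the degenerate point where both of its arguments vanish, and its Clarke gradient is the convex hull of the limiting partial derivatives; because the rows of $\partial g/\partial V$ are (scaled) standard basis vectors, the gradients of the active box constraints are trivially linearly independent.

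The key step is to show that an arbitrary $M\in\partial_{\tilde V}F$ is nonsingular. I would argue by contradiction, solving $M\,[p^\T,q^\T]^\T=0$. Partitioning the constraint indices into strictly inactive, strictly active, and weakly active ones, the Fisher--Burmeister rows either force $q_i=0$ or tie $q_i$ linearly to the corresponding component of $(\partial g/\partial V)\,p$, with coefficients of definite sign. Substituting these relations into the top block and pairing with $p$ collapses the identity to $p^\T\nabla^2_V f\,p$ plus nonnegative complementarity terms; positive definiteness of $\nabla^2_V f$ then forces $p=0$, after which the same relations give $q=0$. Thus $M$ is nonsingular, Clarke's theorem applies, and $\tilde V^*$ is locally Lipschitz; so is $v^*=\tilde{V}^*_{1:n_u}$ by \eqref{col1_v}, since it is merely a linear selection.

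Finally, because $\Psi$ is a $C^r$-BNN, its inverse $\Psi^{-1}(\cdot\,;\bar d)$ is itself $C^r$---the explicit inversion formula in the proof of Proposition~\ref{BNNAI_bijection} composes $C^r$ maps---and therefore locally Lipschitz; composing it with the locally Lipschitz $v^*$ in \eqref{col1_u} preserves local Lipschitz continuity, which is the assertion. I expect the main obstacle to be exactly the nonsingularity argument: the Fisher--Burmeister function is nondifferentiable precisely at weakly active constraints, so one must control the entire set-valued generalized Jacobian uniformly rather than a single Jacobian at a smooth point, and it is there that the affineness of the box constraints together with the positive-definite Hessian does the essential work.
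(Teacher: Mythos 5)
Your proposal is correct and reaches the same high-level conclusion by the same final step as the paper: nonsingularity of every element of the generalized Jacobian $\partial F$, followed by Clarke's implicit function theorem for locally Lipschitz maps. The difference lies in how that nonsingularity is obtained. The paper verifies two structural properties of Problem~\ref{ocp2} --- the linear independence constraint qualification (automatic for the non-degenerate box constraints on $V$) and the strong second-order sufficient condition (from the positive-definite Hessian of $E^\T Q E$) --- and then invokes the known regularity theorem of \cite{facchinei1998regularity}, which states exactly that LICQ plus SSOSC at a KKT point forces every $H\in\partial F$ of the Fischer--Burmeister reformulation to be nonsingular. You instead prove that regularity result by hand for this special case, exploiting that the constraints are affine (so $\nabla_V^2 L=\nabla_V^2 f\succ 0$) and that the Fischer--Burmeister subgradient pairs $(a_i,b_i)$ are nonnegative and never both zero. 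This buys self-containedness at the cost of redoing a known lemma; the paper's citation buys brevity but hides where LICQ and SSOSC actually enter. A further point in your favor: you explicitly handle the last composition in \eqref{col1_u}, observing that $\Psi^{-1}(\cdot;\bar d)$ is $C^r$ by the inversion formula of Proposition~\ref{BNNAI_bijection} and that composing locally Lipschitz maps preserves the property; the paper leaves this step implicit.

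One spot in your nonsingularity argument needs tightening: for indices where the generalized-gradient element has $a_i=0$ (strictly active, or weakly active with the extreme choice $(a_i,b_i)=(0,1)$), the Fischer--Burmeister row forces $\PAR{\pdiff{g_i}{V}}p=0$ but says nothing about $q_i$, so "the same relations give $q=0$" is not quite right. To finish, you must return to the top block with $p=0$, which yields $\sum_{i:a_i=0} q_i \PAR{\pdiff{g_i}{V}}^\T=0$, and then use the linear independence of the active constraint gradients (which you did state: upper and lower bounds on the same coordinate cannot be simultaneously active since $\underline u<\overline u$) to conclude $q_i=0$ on that index set as well. With that repair the argument is complete; this is precisely the point where LICQ, and not just the positive-definite Hessian, is indispensable.
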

    \begin{proof}
        From the specific structure of Problem \ref{ocp2}, the unique existence of $\tilde V^*$ follows from
        \begin{enumerate}
            \item[P1)] linear independent constraint qualification is satisfied,
            \item[P2)] strong second-order sufficient condition\footnote{$
            %\ds final
            \gamma^\T \pdiffdiff{L}{V} \gamma >0$ for any $\gamma\in \R^{2n n_u}$ satisfying $\gamma \neq 0,
            %\ds final
            \pdiff{g_i}{V}\gamma=0\ (i\in I_+)$, where $I_+$ is the index set of non-active inequalities. } is satisfied.
        \end{enumerate}
        This also means any $H\in\partial F$ is nonsingular \cite{facchinei1998regularity}. Then, the local Lipschitz continuity of $\tilde V^*$ follows from  the implicit function theorem for locally Lipschitz continuous functions \cite{Clarke}.
        % Finally, P1) implies that $\tilde{V}^*$ is unique for any $x_0,d_0,r_0$.
    \end{proof}

	As explained in Sections \ref{sec:introduction} and \ref{sec:engine}, the continuity of the control law is significant for implementation. In addition, this continuity guarantees the nonsingularity of $\del F /\del \tilde V$, which is an important property when we apply continuation or homotopy type methods \cite{MPC98}. For example, instead of solving the optimization at each time step, let us update the $\tilde V(t)$ according to $ d\tilde V(t)/dt = -(\del F /\del \tilde V )^{-1} F(\tilde V(t))$. This $\tilde V(t)$ realizes
	$ dF(\tilde V(t))/dt = -F(\tilde V(t))$, and consequently $F(\tilde V(t)) \rightarrow 0$.

\subsection{Control Barrier Function}

In what follows, the reduction of decision variables is discussed for implementation in further computationally severe environments.
Suppose that $\bar d$ and $\bar r$ are stationarily realizable in that there exists $\bar v$ such that  $A(\bar d)\bar x + B(\bar d)\bar v + c(\bar d) = 0$ with $\bar x:= \Phi(\bar r;\bar d)$ and $\bar u:= \Psi^{-1}(\bar v;\bar d)$ satisfies \eqref{eq:constu}. Then, the $v(t)$ that minimizes
\begin{align}
    J = \int_0^\infty (x-\bar x)^\T Q (x-\bar x) + \|v-\bar v\|^2 dt
\end{align}
subject to \eqref{eq:HWx} with $d(t)=\bar d$ is given by
\begin{align}
    v = \bar v + K(x-\bar x),\ K := -B^\T P
\end{align}
where $P$ is the unique positive definite solution to matrix Riccati equation
\begin{align}
    PA + A^\T P - P B^\T B P + Q = 0.
\end{align}
However, $u=\Psi^{-1}(v(t),\bar d)$ does not necessarily satisfy the constraints \eqref{eq:constu} and \eqref{eq:constz}. To guarantee these constraints with lightweight calculation, we propose to use $\Xi(x,v,d)$ as a control barrier function.

\begin{thm}
    Suppose that $\Xi(x,v,d)$ does not depend on $v$.
    Let $\kappa_i(z),i=1,\ldots,n_z,$ be class $\cal K$ functions.
    Define
    \begin{align}
    & {\cal V}_i( x) :=
        \bigg\{ {v}:\Psi^{-1}(v,\bar d) \in {\cal U},  \label{eq:Vi} \\
        & \frac{\partial \Xi_{(i)}(x;\bar d)}{\partial x}
    (A(\bar d) (x-\bar x) + B(\bar d) (v-\bar v) ) \nonumber \\
    & \le \kappa_i(\overline{z}_{(i)}-\Xi_{(i)}(x;\bar d)) \bigg\},\  i=1,\ldots,n_z, \nonumber
    \end{align}
    ${\cal V}(x) := \final{\bigcap}_{i=0}^{n_z} {\cal V}_i(x)$ and
    the control law given by
    \begin{align}
        &{\cal K}(x) := \Psi^{-1}(v^*(x);\bar d) \\
        & v^*(x) := \arg \min_{v\in {\cal V}} \|v-(\bar v + K (x-\bar x) ) \|^2
        \label{eq:filter}
    \end{align}
    If $\Xi(x(0);\bar d)\in \cal Z$, ${\cal V}(x(t))$ is nonempty and $u(t)={\cal K}(x(t))$ is applied, then the constraints \eqref{eq:constu} and \eqref{eq:constz} are satisfied for all $t \ge 0$.
\end{thm}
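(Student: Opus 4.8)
The plan is to interpret each of the $n_z$ output constraints as a zeroing control barrier function and to close the argument with a comparison-lemma invariance step. For each $i$ I would introduce the barrier $h_i(x) := \overline{z}_{(i)} - \Xi_{(i)}(x;\bar d)$, so that the output constraint $z_{(i)}(t) \le \overline{z}_{(i)}$ is equivalent to $h_i(x(t)) \ge 0$, and the hypothesis $\Xi(x(0);\bar d) \in \mathcal{Z}$ is precisely $h_i(x(0)) \ge 0$ for all $i$. The input constraint requires no dynamics at all: since $v^*(x) \in \mathcal{V}(x) \subseteq \mathcal{V}_i(x)$ and the very definition \eqref{eq:Vi} of $\mathcal{V}_i$ contains $\Psi^{-1}(v;\bar d) \in \mathcal{U}$, we immediately get $u(t) = \mathcal{K}(x(t)) = \Psi^{-1}(v^*(x(t));\bar d) \in \mathcal{U}$ for every $t \ge 0$.

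Next I would differentiate $h_i$ along the closed loop. Since $\bar d$ and $\bar r$ are stationarily realizable, $A(\bar d)\bar x + B(\bar d)\bar v + c(\bar d) = 0$, so \eqref{eq:HWx} with $d \equiv \bar d$ becomes $\dot x = A(\bar d)(x - \bar x) + B(\bar d)(v - \bar v)$, whence
\[
\dot h_i = -\frac{\partial \Xi_{(i)}(x;\bar d)}{\partial x}\bigl(A(\bar d)(x - \bar x) + B(\bar d)(v - \bar v)\bigr).
\]
Evaluating at $v = v^*(x) \in \mathcal{V}_i(x)$ and reading off the defining inequality in \eqref{eq:Vi} yields exactly $\dot h_i \ge -\kappa_i(h_i)$ along every trajectory. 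Thus the barrier inequality is encoded directly in the feasible set and needs no separate verification; note also that the Riccati/LQR term $\bar v + K(x - \bar x)$ enters only through the objective of the projection \eqref{eq:filter} and plays no role in invariance — safety is enforced solely by the feasibility $v^*(x) \in \mathcal{V}(x)$.

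Finally I would convert the differential inequality into set invariance: with $\kappa_i$ of class $\mathcal{K}$ we have $\kappa_i(0)=0$, so $y \equiv 0$ solves the majorizing equation $\dot y = -\kappa_i(y)$, and the comparison lemma applied to $\dot h_i \ge -\kappa_i(h_i)$ with $h_i(x(0)) \ge 0$ forces $h_i(x(t)) \ge 0$ for all $t \ge 0$; intersecting over $i = 1,\ldots,n_z$ gives $z(t) \in \mathcal{Z}$ and, together with the input bound, completes the proof. The main obstacle is the regularity that legitimizes this last step. The derivative $\partial \Xi_{(i)}/\partial x$, and hence $\dot h_i$, must exist and be continuous, which the $C^r$ ($r \ge 1$) PICNN structure of $\Xi$ supplies. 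More delicately, at a first putative zero-crossing $t^*$ one has $h_i(x(t^*)) = 0$ and therefore $\dot h_i(x(t^*)) \ge 0$, yet a pointwise sign test is inconclusive when $\dot h_i(x(t^*)) = 0$ (the barrier could still dip quadratically); only the integral comparison lemma — leaning on $\kappa_i(0)=0$ together with local Lipschitzness of $\kappa_i$, or an equivalent Nagumo viability argument — rules this out. A lesser issue is the existence and regularity of the closed-loop trajectory generated by the projection law $v^*(x)$, but the statement already presupposes that a trajectory with $u(t) = \mathcal{K}(x(t))$ is realized, so it is enough to verify the constraints along it.
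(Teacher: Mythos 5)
Your proof is correct and takes essentially the same route as the paper: both read the barrier inequality $\dot h_i \ge -\kappa_i(h_i)$ (equivalently, $\tfrac{d}{dt}\Xi_{(i)}(x(t);\bar d) \le \kappa_i(\overline{z}_{(i)}-\Xi_{(i)}(x(t);\bar d))$) directly off the feasibility constraint defining ${\cal V}_i$, note that the input constraint is immediate from $\Psi^{-1}(v^*;\bar d)\in{\cal U}$ being built into ${\cal V}_i$, and conclude forward invariance of ${\cal Z}$. The only difference is that the paper delegates the final invariance step to standard control barrier function theory (citing Ames et al.), whereas you inline it via the comparison lemma; your observation that a pointwise sign test at the boundary is by itself inconclusive is apt, since the paper's intermediate sentence is exactly that pointwise statement before it hands off to the citation.
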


\begin{proof}
    When $v(t)\in {\cal V}_i$, \begin{align}
        \frac{d \Xi_{(i)}(x(t);\bar d)}{d t} \le
         \kappa_i(\overline{z}_{(i)} - \Xi_{(i)}(x(t);\bar d)).
    \end{align}
    This implies that the time derivative of $\Xi_{(i)}(x(t);\bar d)$ is non-positive
    % $\frac{\partial \Xi}{\partial t}\ge 0$
    whenever $\Xi_{(i)}(x(t);\bar d) = \overline{z}_{(i)}$. Therefore, the forward invariance of $\cal Z$ follows from the standard theory for control barrier functions \cite{Ames2019}.
\end{proof}

In order to implement this control law, only \eqref{eq:filter} should be solved online. Note that this is a standard quadratic programming since $\Psi^{-1}(v,\bar d) \in {\cal U}$ is equivalent to a box constraint on $v$ and the inequality constraints included in (\ref{eq:Vi}) are linear inequalities.

\section{Numerical Experiment}
\label{sec:engine}

\subsection{Engine Airpath System}
	\graphicspath{{./figs/}}
	\begin{figure}[b]
		\centering
		\includegraphics[clip, scale=0.6]{./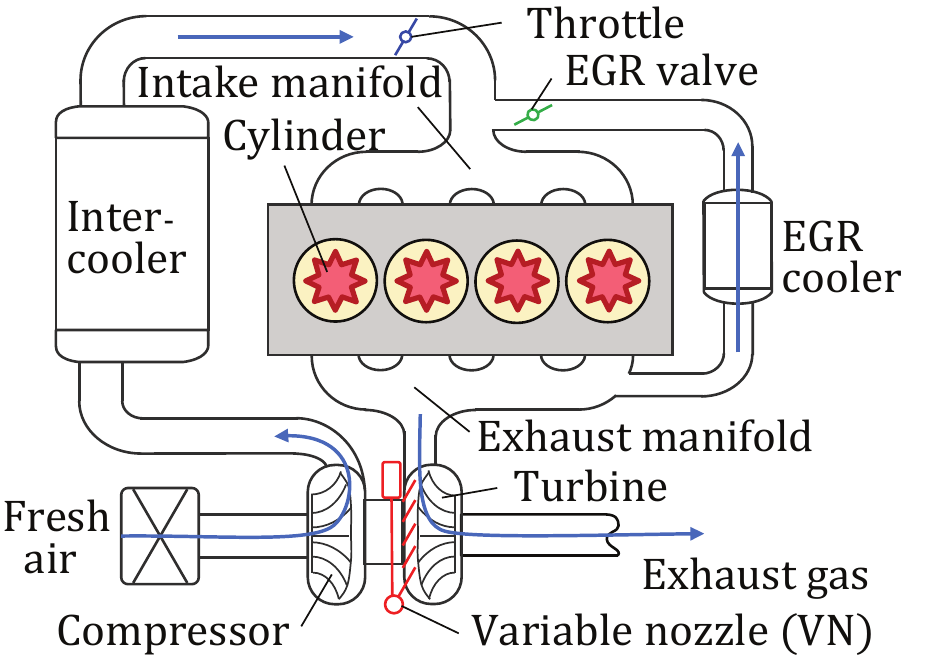}\\
		\caption{Engine airpath system \label{airpath}}
	\end{figure}
	We verify the effectiveness of the proposed method through a numerical experiment on control of engine airpath system depicted in Fig.~\ref{airpath}. This is a heavily nonlinear MIMO system having input- and output-constraints.
    Control input $u=[u_{(1)},u_{(2)},u_{(3)}]^\T$ represents opening positions of variable nozzle, throttle, and Exhaust Gas Recirculation (EGR) valve. Exogenous input $d=[d_{(1)},d_{(2)}]^\T$ is engine speed and fuel injection amount. A tracking reference is given for output $y=[y_{(1)},y_{(2)},y_{(3)}]^\T$, which represents boost pressure (intake manifold gas pressure), EGR ratio, and Pumping Mean Effective Pressure (PMEP).
	A ceiling value is given for the turbine speed $z$.

	By using the data generated by a high-precision simulator equivalent to a real machine, we developed two nonlinear models. One is our proposed model (\ref{v})-(\ref{z}). Concerning the activation function, we employ
	$%$\begin{align}
		\varphi^{(\cdot)} \PAR{\xi,\eta} = \sinh^{-1} \PAR{ \alpha^{(\cdot)}(\eta) + \sinh \PAR{\xi}},\label{act}
    $%\end{align}
	where $\alpha^{(\cdot)},\beta^{(\cdot)}$ are 3-layered neural networks whose activation function is $\tanh$. For $\Xi$,
	$\varphi_\zeta^\PAR{\cdot}, \varphi_\eta^\PAR{\cdot}$ is  the softplus function, which is a convex and monotonically increasing $C^\infty$ function.
	The other, for comparison purpose, is a standard 3-layered neural network in the form of
	\begin{align}
		&\PPPAR{y_{k+1}^\T,z_k^\T}^\T \nonumber\\
		&= W^{(2)} \tanh\PAR{ W^{(1)} \PPPAR{y_k^\T,u_k^\T,d_k^\T}^\T \!+\! b^{(1)}} + b^{(2)}, \label{nnmodel}
	\end{align}
	where $W^{(\cdot)},b^{(\cdot)}$ is weighting matrices and bias vectors. Detailed architecture such as the number of nodes is tuned so that these models have the almost same degree of representation ability.
	The resulting model accuracy, which is not shown due to page limitation, is comparable.

\subsection{Control System Design}

	The optimization criteria are described for both models. For the proposed model, we solve Problem \ref{ocp2} with $f_u=0$, $f_z$ in \eqref{f_z}, and $Q$ in \eqref{eq:Q}. Then,
	the control law is given by (\ref{col1_u}))\footnote{The proposed control law $u(x,d,r)$ represents $u(\Phi(y,d),d,r)$.}.
	For the 3-layered neural network model, we solve the non-convex optimization problem
	\begin{align}
		u(y,d,r) &\!=\! U^*_{1:n_u}(y,d,r),\\
		U^* (y,d,r) &\!=\! \argmin_{U}\ \ \|E_y\|^2 + f_z(Z) \nonumber\\
		& \ \ \ {\rm s.t. } \ \ \underline{u} \leq\! u_k \!\leq\! \overline{u} \ (k\!=\!0,\ldots,n\!-\!1),\label{ocp3}
	\end{align}
	with $E_y \coloneqq Y - R$ and $f_z$ in \eqref{f_z}. As a reliable nonlinear optimization solver, Sequential Quadratic Programming (SQP) method is applied to both problems with several initial conditions. Note that the convergence of the SQP method to the global optimizer is not guaranteed for non-convex problems.

\subsection{Result}

    Fig.~\ref{ctrllaw} shows the results of the SQP method for obtaining $u$ for the two control laws described above, with only initial $y_{(1)}$ (for SQP) changed and the other values fixed at specific values.
    Fig. 2(a) and (b) show the results using the 3-layered neural network and the proposed method, respectively. The vertical axis is normalized to the upper and lower limits for each element of $u$.
	Thick lines (A) represent the result obtained by SQP with initial input $u$ taken as the middle of upper and lower limits of $u$, while the initial input for
	thin lines (B) is given as the lower limits of $u$.
    The vertical axis is normalized to the upper and lower limits for each element of $u$.

    First, since the OCP for the control law for the 3-layered neural network is a non-convex problem, it can be confirmed that different solutions are obtained in (A) and (B) of Fig.~\ref{ctrllaw}(a).
    In addition, there are discontinuities where the solution bifurcates to different solutions.
	On the other hand, in Fig.~\ref{ctrllaw}(b), we can confirm that the solutions are unique and continuous, indicating the effectiveness of the method.

Finally, the results of the MPC simulation using each control law are shown in Fig.~\ref{mpc}. The dashed and dotted lines in the figure show the results of MPC simulations using each control law.
% The dashed and dotted lines in the figure show the results of MPC simulations using the above (A) and (B) for the control using the three-layer NN, respectively.
Observe that the trajectories largely depend on the choice of the initial input.
In particular, the result of (A) shows the fluttering of the input and deviation from the target value, which may be caused by the discontinuity of the control law. On the other hand, the thin solid line is the result of the proposed method, and only one case is shown because the result does not depend on the initial value of the search. In this case, good target tracking and constraint satisfaction are achieved.

	\graphicspath{{./figs/}}
	\begin{figure}[t]
		\centering
		\includegraphics[clip, scale=0.65]{./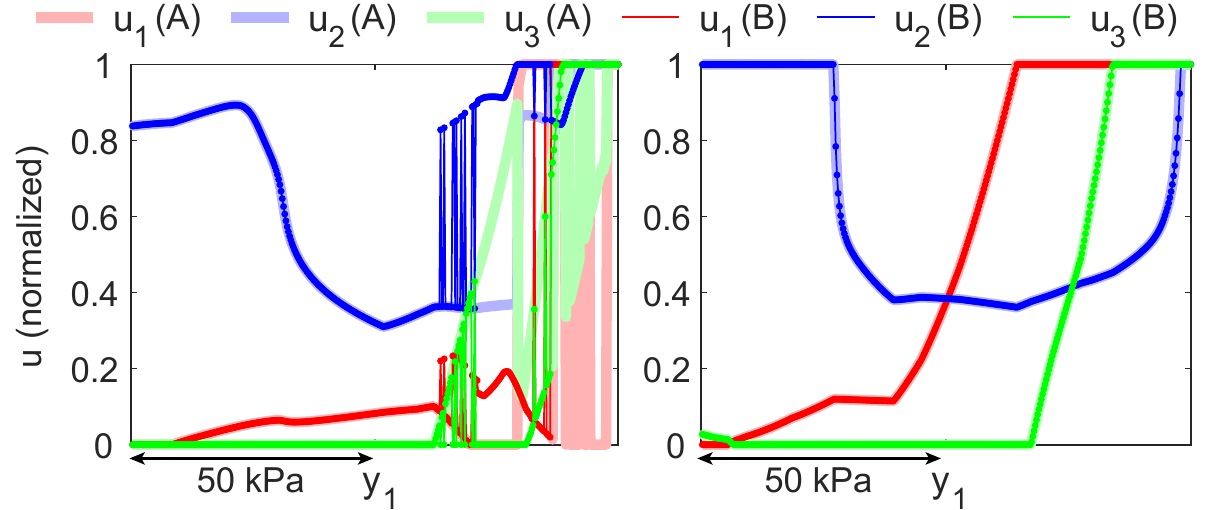}\\
		{\small (a) 3-layered NN based \hspace{30pt} (b) Proposed}
		\caption{Control law \label{ctrllaw}}
		\includegraphics[clip, scale=1.0]{./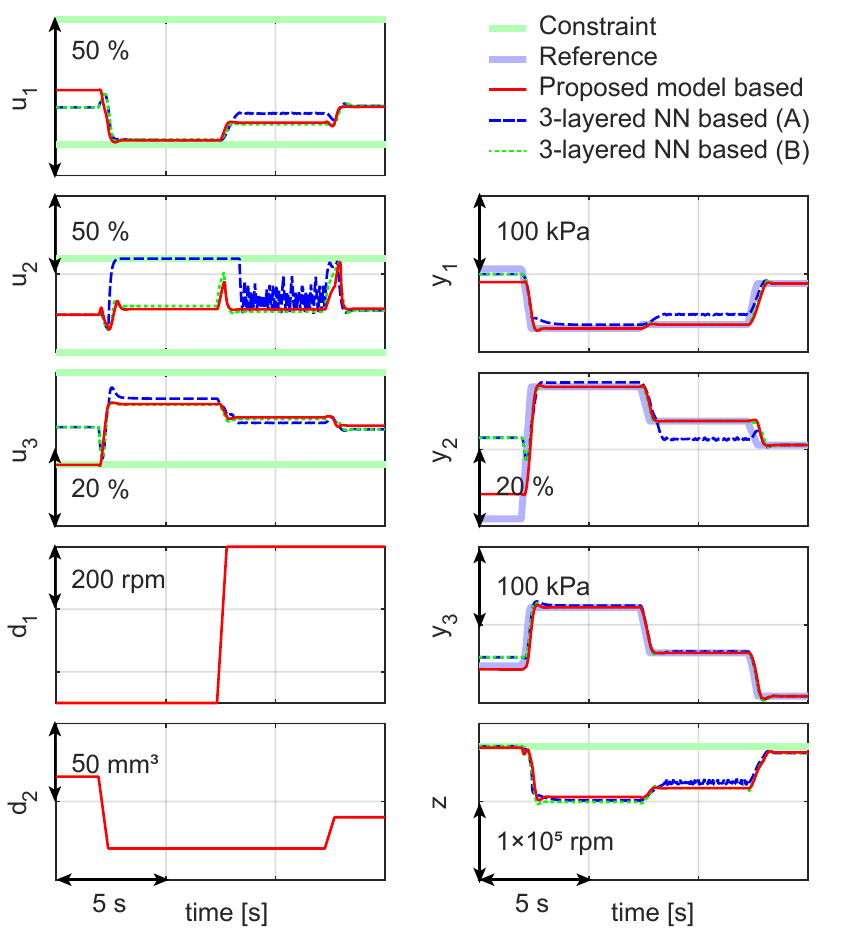}\\
		\caption{Results of model predictive control \label{mpc}}
	\end{figure}

\section{Conclusion}

    In this paper, we proposed a learning model structure that combines the Hammerstein-Wiener model with Bijective NN and Input Convex NN, which are extended to represent disturbance dependency and differentiability. Using this model, we formulated the optimal tracking control problem with input-output constraints as a convex problem and guarantee the continuity of the control law.

    The effectiveness of the method is demonstrated by numerical examples for an engine airpath system, which is a multi-input/output system with input/output constraints. This method is expected to have a wide range of industrial applications, including safety-critical applications because it provides a kind of reliability guarantee for machine learning-based control design.

    The usefulness of the MPC methods described after Theorem 10 has already been confirmed by theory and experiment. The proposed modeling framework can also be extended to differentially flat systems. These results will be presented in a future publication. We are currently working on the relaxation of the diagonality of the input operator $\Psi$ and the application of differentiable MPC.

\newpage
\bibliography{HW_reference}
\bibliographystyle{IEEEtran}

\end{document}